\theoremstyle{plain}
\newtheorem{theorem}{Theorem}
\newtheorem{lemma}{Lemma}
\theoremstyle{definition}
\newtheorem{remark}{Remark}
\begin{document}

\title[symmetric tensor rank]{Real and complex rank for real symmetric tensors
with low ranks}
\author{Edoardo Ballico, Alessandra Bernardi}
\address{University of Trento\\
  Dept. of Mathematics\\
38123 Povo (TN), Italy}
\address{University of Torino\\
  Dept. of Mathematics ``Giuseppe Peano''\\
  10123 Torino, Italy}
\email{ballico@science.unitn.it, alessandra.bernardi@unito.it}
\thanks{The authors were partially supported by CIRM of FBK Trento 
(Italy), Project Galaad of INRIA Sophia Antipolis M\'editerran\'ee 
(France), Institut Mittag-Leffler (Sweden), Marie Curie: Promoting science (FP7-PEOPLE-2009-IEF), MIUR and GNSAGA of 
INdAM (Italy).}
\subjclass{14N05; 15A69}
\keywords{symmetric tensor rank; Veronese embedding; real symmetric tensor rank}

\begin{abstract}We study the case of a real homogeneous polynomial $P$ whose  minimal real and complex decompositions in terms of powers of linear forms are different. We prove that, if the sum of the complex and the real ranks of $P$ is at most $ 3\deg(P)-1$, then the difference of the two decompositions is completely determined either on a line or on a conic or two disjoint lines.
\end{abstract}

\maketitle



\pagestyle{myheadings}
\thispagestyle{plain}

\section*{Introduction}\label{S1}

The problem of decomposing a tensor into a minimal sum of rank-1 terms, is raising interest and attention from many applied areas as signal processing for telecommunications \cite{deLC}, independent component analysis \cite{Comon}, complexity of matrix multiplication \cite{Stra83:laa},  complexity problem of  P versus NP \cite{Vali},  quantum physics \cite{entanglement}, \cite{bi} and phylogenetic  \cite{ar}. 
The particular instance in which the tensor is symmetric and hence representable by a homogeneous polynomial, is one of the most studied and developed ones (cfr. \cite{l} and references therein). In this last case we say that the rank of a homogeneous polynomial $P$ of degree $d$ is the minimum integer $r$ needed to write it as a linear combination of pure powers of linear forms $L_1, \ldots , L_r$:
\begin{equation}\label{F}
P=c_1L_1^d + \cdots +  c_dL_r^d.
\end{equation}

with $c_i\ne 0$. Most of the papers concerning the abstract theory of the symmetric tensor rank, require that the base field is algebraically closed. In this case
we may take $c_i=1$ for all $i$ without loss of generality. However, for the applications, it is very important to consider the case of real polynomials and look at their real decomposition. Namely, one can study separately the case in which the linear forms appearing in (\ref{F}) are complex or reals. In the
real case we may take $c_i=1$ for all $i$ if $d$ is odd, while we take $c_i\in \{-1,1\}$ if $d$ is even. When we look for a minimal complex (resp. real) decomposition as in (\ref{F}) we  say that we are computing the \emph{complex symmetric rank} (resp. \emph{real symmetric rank}) of $P$ and we will indicate it $r_{\mathbb{C}}(P)$ (resp. $r_{\mathbb{R}}(P)$). Obviously
$$r_{\mathbb{C}}(P)\leq r_{\mathbb{R}}(P),$$
and in many cases such an equality is strict.

In \cite{co} P. Comon and G. Ottaviani studied the real case for bivariate symmetric tensors. Even in this case there are many open conjectures and, up to now, few cases are completely settled (\cite{co}, \cite{cr}, \cite{b}, \cite{bl}).

In this paper we want to study the relation among $r_{\mathbb{C}}(P)$ and $r_{\mathbb{R}}(P)$ in the special circumstance in which $r_{\mathbb{C}}(P) < r_{\mathbb{R}}(P)$. In particular we will show that, in a certain range (say, $r_{\mathbb{C}} (P) +r_{\mathbb{R}} (P) \le 3\deg (P)-1$),
all homogeneous polynomials $P$ of that degree with $r_{\mathbb{R}} (P) \ne r_{\mathbb{C}} (P)$ are characterized by the existence of a curve with the property that the sets evincing the real and the complex ranks coincide out of it (see Theorem \ref{i1} for the precise statement). More precisely: let $P\in S^d\mathbb{R}^{m+1}$ be a real homogeneous polynomial of degree $d$ in $m+1$ variables such that $r_{\mathbb{C}}(P) < r_{\mathbb{R}}(P)$ and $r_{\mathbb{C}} (P) +r_{\mathbb{R}} (P) \le 3\deg (P)-1$; therefore its real and complex decomposition are
$$P=a_1L_1^d+ \cdots + a_kL_k^d+c_1M_1^d+ \cdots +c_sM_s^d,$$
$$P=N_1^d+ \cdots + N_h^d+c_1M_1^d+ \cdots +c_sM_s^d$$
respectively, with $k>h$, $a_i\in \{-1,1\}$, $c_j\in \{-1,1\}$, $M_1, \ldots , M_s\in S^1\mathbb{R}^{m+1}$, $h+k>d+2$. Moreover there exists a curve $C' \subset \mathbb {P}^m$ such that $[L_1], \ldots , [L_k], [N_1], \ldots , [N_h]$
``~depends only from the variables of $C'$~'' and $C'$ is either a line or a reduced conic or a disjoint union of two lines.
If $C'$ is a line (item (\ref{a}) in Theorem \ref{i1}) then both the $L_i$'s and the $N_i$'s are linear forms in the same two ``variables". If $C'$ is a conic, then $L_i$'s and $N_i$'s depend on 3 ``variables" and their projectivizations lie on $C'$. See item (c) of Theorem \ref{i1} for the geometric interpretation
of the reduction of $L_1,\dots ,L_k$ and $N_1,\dots ,N_h$ to bivariate forms involved with
$C'$ when $C' =l\sqcup r$ is a disjoint union of two lines $l$ and $r$ (we have two sets of bivariant forms, one for the variables of $l$ and one for the variables of $r$).

\section{Notation and statements}

Before giving the precise statement of Theorem \ref{i1} we need to introduce the main algebraic geometric tools that we will use all along the paper.

Let $\nu _d: \mathbb {P}^m \to \mathbb {P}^N$, $N:= {m+d \choose d}-1$, 
denote the degree $d$ Veronese embedding of $\mathbb {P}^m$ (say, defined over $\mathbb {C})$.
Set $X_{m,d}:= \nu _d(\mathbb {P}^m)$.
For any $P\in \mathbb {P}^N$, the {\emph {symmetric rank}} or {\emph 
{symmetric tensor rank}} or, just, the {\emph {rank}} 
$r_{\mathbb{C}} (P)$ of $P$ is the minimal cardinality of a finite set 
$S\subset \mathbb {P}^m(\mathbb {C})$ such that $P\in \langle \nu _d(S)\rangle$, where
$\langle \ \ \rangle$ denote the linear span (here the linear span is with respect to complex coefficients), and we will say that $S$ \emph{evinces} $r_{\mathbb{C}}(P)$.
Notice that the Veronese embedding $\nu _d$ is defined over $\mathbb {R}$, i.e.
$\nu _d(\mathbb {P}^m(\mathbb {R})) \subset \mathbb {P}^N(\mathbb {R})$. For each $P\in \mathbb {P}^N(\mathbb {R})$ the \emph{real symmetric rank} $r_{\mathbb{R}} (P)$ of $P$ is the minimal cardinality
of a finite set $S\subset \mathbb {P}^m(\mathbb {R})$ such that $P\in \langle \nu _d(S)\rangle _{\mathbb {R}}$, where $\langle \ \ \rangle _{\mathbb {R}}$ means the linear span with real coefficients, and we will say that $S$ evinces $r_{\mathbb{R}}(P)$. The integer $r_{\mathbb{R}} (P)$ is well-defined, because $\nu _d(\mathbb {P}^m(\mathbb {R}))$ spans $\mathbb {P}^N(\mathbb {R})$. 

Let us fix some notation: If $C\subset \mathbb{P}^m$ is either a curve or a subspace and $S\subset \mathbb{P}^m$ is a finite set, we will use the following abbreviations:
$$S_C:=S\cap C,$$
$$S_{\hat{C}}:=S\setminus (S\cap C).$$

\begin{theorem}\label{i1}
Let $P\in \mathbb {P}^N(\mathbb {R})$ be such that $r_{\mathbb{C}} (P)+r_{\mathbb{R}} (P) \le 3d-1$
and $r_{\mathbb{C}} (P) \ne r_{\mathbb{R}} (P)$. Fix any set $S_{\mathbb{C}}\subset \mathbb {P}^m(\mathbb {C})$ and ${S_{\mathbb{R}}}\subset \mathbb {P}^m(\mathbb {R})$  evincing $r_{\mathbb{C}} (P)$ and  $r_{\mathbb{R}} (P)$ respectively. Then
one of the following cases (a), (b), (c) occurs:

\begin{enumerate}[(a)]

\item\label{a} There is a line $l\subset \mathbb {P}^m$ defined
over $\mathbb {R}$ and with the following properties:

\begin{enumerate}[(i)]
	
	\item\label{ai} $S_{\mathbb{C}}$ and $S_{\mathbb{R}}$ coincide out of the line $l$ in a set $S_{\hat{l}}$: $$S_{\mathbb{C}}\setminus S_{\mathbb{C}}\cap l = {S_{\mathbb{R}}}\setminus {S_{\mathbb{R}}}\cap l=:S_{\hat{l}};$$
	
	\item\label{aii} there is a point $P_l\in \langle \nu _d(S_{\mathbb{C},l})\rangle \cap \langle \nu _d(S_{\mathbb{R},l})\rangle$ such that $S_{\mathbb{C},l}$ evinces $r_{\mathbb{C}} (P_l)$ and $S_{\mathbb{R},l}$ evinces $r_{\mathbb{R}} (P_l)$;
	
	\item\label{aiii} $\sharp (S_{\mathbb{C},l}\cup {S_{\mathbb{R},l}}) \ge d+2$ and $\sharp (S_{\mathbb{C},l})< \sharp (S_{\mathbb{R},l})$.

\end{enumerate}

\item\label{b} There is a conic $C\subset \mathbb {P}^m$ defined
over $\mathbb {R}$ and with the following properties:

\begin{enumerate}[(i)]

	\item\label{bi} $S_{\mathbb{C}}$ and $S_{\mathbb{R}}$ coincide out of the conic $C$ in a set  $S_{\hat{C}}$: $$S_{\mathbb{C}}\setminus S_{\mathbb{C},C}= {S_{\mathbb{R}}}\setminus {S_{\mathbb{R}}}\cap C=:S_{\hat{C}};$$
	
	\item\label{bii}  there is a point $P_C\in \langle \nu _d(S_{\mathbb{C},C})\rangle \cap \langle \nu _d(S_{\mathbb{R},C})\rangle$ such that $S_{\mathbb{C},C}$ evinces $r_{\mathbb{C}} (P_C)$ and $S_{\mathbb{R},C}$ evinces $r_{\mathbb{R}} (P_C)$;
	
	\item\label{biii} $\sharp (S_{\mathbb{C},C}\cup {S_{\mathbb{R},C}}) \ge 2d+2$ and $\sharp (S_{\mathbb{C},C})< \sharp (S_{\mathbb{R},C})$;

	\item\label{biv} if $C$ is reducible, say $C = l_1\cup l_2$ with $Q=l_1\cap l_2$, then $\sharp ((S_{\mathbb{C}}\cup {S_{\mathbb{R}}})\cap (l_i\setminus Q)) \ge d+1$ for $i\in \{1,2\}$.

\end{enumerate}

\item\label{c} $m\ge 3$ and there are $2$ disjoint lines $l, r \subset \mathbb {P}^m$ defined over $\mathbb {R}$  with the following properties:

\begin{enumerate}[(i)]
	
	\item\label{ci} $S_{\mathbb{C}}$ and $S_{\mathbb{R}}$ coincide out of the union $\Gamma:=l\cup r$ in a set $S_{\hat{\Gamma}}$:
	$$S_{\mathbb{C}}\setminus S_{\mathbb{C}}\cap (l\cup r) = 
	S_{\mathbb{R}}\setminus S_{\mathbb{R}}\cap (l\cup r):=S_{\hat{\Gamma}};$$
	
	\item\label{cii} $\sharp (S_{\mathbb{C},l}\cup {S_{\mathbb{R},l}}) \ge d+2$ and $\sharp (S_{\mathbb{C},r}\cup {S_{\mathbb{R},r}}) \ge d+2$;

		\item\label{ciii} the set  $\langle  \nu_d(S_{\hat{\Gamma}})\rangle\cap \langle\nu _d(\Gamma)\rangle$
is a single point, $O_{\Gamma}\in \mathbb {P}^N(\mathbb {R})$; \\$S_{\mathbb{C},\Gamma}$ evinces $r_{\mathbb{C}} (O_{\Gamma})$ and ${S_{\mathbb{R},\Gamma}}$ evinces $O_{\Gamma}$; 
		\item\label{civ} the set $\langle \{O_\Gamma\}\cup \nu _d(l)\rangle \cap \langle \nu
_d(l)\rangle$ (resp. $\langle \{O_{\Gamma}\}\cup \nu _d(r)\rangle \cap \langle \nu
_d(r)\rangle$) is formed by a unique point $O_l\in \mathbb {P}^N(\mathbb {R})$ (resp. $O_r\in \mathbb {P}^N(\mathbb {R})$);\\
$ S_{\mathbb{C},l}$ (resp. $S_{\mathbb{C},r}$) evinces $r_{\mathbb{C}} (O_l)$ (resp.  $r_{\mathbb{C}} (O_r)$);\\
${S_{\mathbb{R},l}}$ (resp. $ {S_{\mathbb{R},r}}$) evinces $r_{\mathbb{R}} (O_l)$ (resp $r_{\mathbb{R}} (O_r)$).
 \end{enumerate} \end{enumerate} \end{theorem}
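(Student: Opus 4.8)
The engine of the argument is the linear dependence forced by having two distinct minimal decompositions of the same point. I would first set $Z:=S_{\mathbb{C}}\cup S_{\mathbb{R}}$ and record the crude bound $\sharp Z\le \sharp S_{\mathbb{C}}+\sharp S_{\mathbb{R}}=r_{\mathbb{C}}(P)+r_{\mathbb{R}}(P)\le 3d-1$. Since $P\in \langle \nu_d(S_{\mathbb{C}})\rangle \cap \langle \nu_d(S_{\mathbb{R}})\rangle$ while minimality makes each of $\nu_d(S_{\mathbb{C}})$ and $\nu_d(S_{\mathbb{R}})$ linearly independent, the two spans must overlap, so $\nu_d(Z)$ is linearly dependent; equivalently $h^1(\mathbb{P}^m,\mathcal I_Z(d))>0$. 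The whole problem is thereby transferred to the geometry of a reduced set of at most $3d-1$ points whose degree-$d$ Veronese image fails to impose independent conditions.

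The core step is to classify such $Z$. The plan is to run the residual exact sequence $0\to \mathcal I_{Z_{\hat l}}(d-1)\to \mathcal I_Z(d)\to \mathcal I_{Z\cap l,\,l}(d)\to 0$ for a line $l$ maximizing $\sharp(Z\cap l)$, using that on $l\cong \mathbb P^1$ one has $h^1(\mathcal I_{Z\cap l,\,l}(d))>0$ exactly when $\sharp(Z\cap l)\ge d+2$, and then to induct on $d$. A Castelnuovo/B\'ezout count controlling how $\le 3d-1$ points can fail independence should yield the trichotomy: either one line carries $\ge d+2$ points of $Z$; or a planar (possibly reducible) conic carries $\ge 2d+2$ points; or, only when $m\ge 3$ and no single conic absorbs the defect, two skew lines $l,r$ each carry $\ge d+2$ points and support separate degree-$d$ dependences. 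This produces the curve $C'$ of (a), (b), (c), and the thresholds $d+2$ and $2d+2$ give the numerical bounds (aiii), (biii), (cii). The budget $\sharp Z\le 3d-1$ is precisely what prevents, say, three heavy lines, and in case (c) it forces the common residual $S_{\hat\Gamma}$ to be small.

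Once $C'$ is in hand I would extract the remaining structure. For coincidence off $C'$ the key is that the residual $Z_{\hat{C'}}$ is small enough that $h^1(\mathcal I_{C'\cup Z_{\hat{C'}}}(d))=0$, so $\langle \nu_d(C')\rangle$ and $\langle\nu_d(Z_{\hat{C'}})\rangle$ are in direct sum; writing $P$ as a curve-part plus an off-curve-part is then unique, and since the off-curve parts of the two decompositions lie in the same span this forces $S_{\mathbb{C},\hat{C'}}=S_{\mathbb{R},\hat{C'}}$ and simultaneously produces the single intersection points $P_l$, $P_C$, $O_\Gamma$ (and then, by a second splitting of $\Gamma$ into $l$ and $r$, the points $O_l,O_r$ of (civ)). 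The inequality $\sharp S_{\mathbb{C},l}<\sharp S_{\mathbb{R},l}$ then follows formally, since coincidence off $C'$ gives $\sharp S_{\mathbb{C},l}-\sharp S_{\mathbb{R},l}=r_{\mathbb{C}}(P)-r_{\mathbb{R}}(P)<0$. The evincing assertions (aii), (bii), (ciii), (civ) follow by restricting minimality to the pieces: a shorter decomposition of $P_l$ (over $\mathbb C$ or over $\mathbb R$) would recombine with $S_{\hat{C'}}$ to shorten the global decomposition, a contradiction. Finally $C'$ is defined over $\mathbb R$ because $S_{\mathbb{R}}$ is real and, by the inequality just noted, contributes strictly more points to $C'$ than $S_{\mathbb{C}}$ does, hence at least two real points to each line; reality of the conic I would obtain from its points of $S_{\mathbb{R}}$ together with a conjugation-invariance argument.

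The main obstacle is the classification of the second step, and in particular cleanly separating the skew-lines case (c) from the conic case (b): two skew lines impose two genuinely independent degree-$d$ dependences that cannot be merged onto one planar curve, so I must rule out all intermediate configurations and show that the $\le 3d-1$ budget splits as $(d+2)+(d+2)$ plus a small common residual. The second delicate point is keeping everything real while $S_{\mathbb{C}}$ may contain non-real points: I would use that $P$ is real to pair conjugate points of $S_{\mathbb{C}}$ and to argue that the distinguished curve and the auxiliary points $P_l,P_C,O_\Gamma,O_l,O_r$ are conjugation-invariant, hence defined over $\mathbb R$.
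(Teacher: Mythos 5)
Your overall architecture matches the paper's: the dependency $h^1(\mathcal{I}_{S_{\mathbb{C}}\cup S_{\mathbb{R}}}(d))>0$ coming from the two distinct minimal decompositions, the location of that dependency on a line or a degree-two curve, the residual exact sequence forcing $S_{\mathbb{C}}$ and $S_{\mathbb{R}}$ to agree off the distinguished curve, and the restriction-of-minimality argument for the evincing statements are all the right moves and are essentially what the paper does (the paper outsources the line-or-conic dichotomy to Couvreur's Theorem 3.8 rather than reproving it by induction on $d$, but that is a presentational difference). Your derivation of $\sharp S_{\mathbb{C},l}<\sharp S_{\mathbb{R},l}$ and of the reality of the distinguished curve is also sound. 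One framing slip: the trichotomy is not a classification of the set $Z=S_{\mathbb{C}}\cup S_{\mathbb{R}}$ itself (two skew lines each carrying $d+2$ points is already subsumed by ``some line carries $\ge d+2$ points''); case (c) emerges only from the secondary analysis of the residual at level $d-1$ after a heavy line has been found.

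The genuine gap is the step you yourself flag as ``the main obstacle'': separating case (a) from case (c) when $h^1(\mathcal{I}_{Z}(d-1))>0$ forces a second line $r$ disjoint from $l$. The correct threshold for $r$ is that it carries at least $d+1$ (not $d+2$) points of the residual $Z_{\hat{l}}$, because the relevant twist after removing $l$ is $d-1$; so you must dispose of two intermediate configurations: both lines carrying exactly $d+1$ points of $Z$, and $l$ heavy while $r$ carries exactly $d+1$. Your direct-sum/uniqueness argument from the third paragraph does not apply here, since $h^1(\mathcal{I}_{Z}(d-1))>0$ is precisely the statement that the residual off $l$ is not independent, so the splitting off a single line fails. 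The paper's device is to place $\Gamma=l\sqcup r$ on a smooth real quadric surface $W\subset\mathbb{P}^3$ with $W\cap Z=\Gamma\cap Z$, use $h^1(W,\mathcal{O}_W(d-2,d))=0$ to show that the restriction $H^0(W,\mathcal{O}_W(d))\to H^0(\Gamma,\mathcal{O}_\Gamma(d))$ is surjective, hence that $h^1$ of the points on $W$ splits as the sum of the contributions of $l$ and of $r$, and then to run Grassmann's formula to compute $\dim\bigl(\langle\nu_d(S_{\mathbb{C},W})\rangle\cap\langle\nu_d(S_{\mathbb{R},W})\rangle\bigr)$. That computation shows that $\sharp Z_l=\sharp Z_r=d+1$ is impossible (neither line contributes to $h^1$) and that when $\sharp Z_r=d+1$ the intersection of the two spans is generated by its part over $l$, which collapses the configuration to case (a). Without this, or an equivalent mechanism, the trichotomy and in particular condition (cii) are asserted rather than proved.
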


\section{The proof}

\begin{remark}\label{d0} Let $S\subset \mathbb{P}^N(\mathbb{R})$. It will be noteworthy in the sequel  that  $S$ can be used to span both a real  space $\langle S \rangle_{\mathbb{R}}\subset \mathbb{P}^N(\mathbb{R})$ and a complex space $\langle S \rangle_{\mathbb{C}}\subset \mathbb{P}^N(\mathbb{C})$ of the same dimension and $\langle S \rangle_{\mathbb{C}}\cap \mathbb{P}^N(\mathbb{R})=
\langle S \rangle_{\mathbb{R}}$.
In the following we will always use $ \langle \ \ \rangle$
to denote $\langle \ \  \rangle_{\mathbb{C}}$.
\end{remark}

\begin{remark}\label{d1}
Fix $P\in \mathbb {P}^N$ and a finite set $S\subset \mathbb {P}^N$ such that
$S$ evinces $r_{\mathbb{C}} (P)$. Fix
any $E\subsetneq S$. Then the set $\langle \{P\} \cup E \rangle \cap \langle S\setminus E\rangle$ is a single point (call it $P_1$) and $S\setminus
E$ evinces $r_{\mathbb{C}} (P_1)$. Now assume $P\in \mathbb {P}^N(\mathbb {R})$ and $S\subset  \mathbb {P}^N(\mathbb {R})$. Then $P_1\in \mathbb {P}^N(\mathbb {R})$. If  $S$ evinces $r_{\mathbb{R}} (P)$, then $S\setminus E$ evinces $r_{\mathbb{R}} (P_1)$.
\end{remark}

\begin{lemma}\label{c2} Let $C\subset \mathbb {P}^m$ be a reduced curve of degree $t$ with $t=1,2$. Fix finite sets $A,B \subset \mathbb {P}^m$. Fix an integer $d>t$ such
that:
$$h^1(\mathcal {I}_{(A\cup B)_{\hat{C}}}(d-t))=0.$$
Assume the existence of $P\in \langle \nu _d(A)\rangle \cap \langle \nu _d(B)\rangle$ and $P\notin \langle \nu _d(S')\rangle$ for any $ S' \subsetneq A$ and any $S' \subsetneq {B}$. Then
$$A_{\hat{C}}=B_{\hat{C}}.$$
\end{lemma}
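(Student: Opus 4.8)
The plan is to argue by contradiction. Suppose $A_{\hat C}\neq B_{\hat C}$; then the symmetric difference of these two finite sets is nonempty, and after possibly exchanging the roles of $A$ and $B$ I may fix a point $q\in B_{\hat C}\setminus A_{\hat C}$. Since $q\in B_{\hat C}$ we have $q\in B$ and $q\notin C$, and because $q\notin C$ the condition $q\notin A_{\hat C}$ forces $q\notin A$ altogether. First I record the consequence of the minimality hypothesis that I will use repeatedly: since $P$ lies in the span of no proper subset of $A$ (resp. of $B$), the sets $\nu_d(A)$ and $\nu_d(B)$ are linearly independent. The goal is then to produce a single degree-$d$ form $F$ on $\mathbb P^m$ whose zero hypersurface, viewed through $\nu_d$ as a hyperplane of $\mathbb P^N$, contains $\nu_d(A)$ and $\nu_d(B\setminus\{q\})$ but \emph{not} $\nu_d(q)$; this turns out to be enough to force $P\in\langle\nu_d(B\setminus\{q\})\rangle$, contradicting the minimality of $B$.

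I will build $F$ as a product $F=g\,h$. For the factor $h$ I use the hypothesis $h^1(\mathcal I_{(A\cup B)_{\hat C}}(d-t))=0$: a reduced finite set imposing independent conditions in degree $d-t$ means that the evaluation map $H^0(\mathcal O_{\mathbb P^m}(d-t))\to \bigoplus_{p\in (A\cup B)_{\hat C}}\mathbb C$ is surjective (here $d>t$ guarantees $d-t\ge 1$), so there is an $h$ of degree $d-t$ with $h(q)\neq 0$ that vanishes on $(A\cup B)_{\hat C}\setminus\{q\}$. For the factor $g$ I need a hypersurface of degree $t$ containing $C$ but missing $q$. This is where the restriction $t\le 2$ enters, and it is the step I expect to require the most care: for $t=1$ one takes a hyperplane through the line $C$ that avoids $q$ (possible since the hyperplanes through a line have $C$ as base locus), while for $t=2$ one needs a quadric through $C$ avoiding $q$, which amounts to checking that the base locus of the quadrics through a reduced conic, or through a pair of lines, including the skew case of item (c), is exactly $C$; one then takes a general such quadric or writes it explicitly as a product of two hyperplanes through the component lines. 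Setting $F=gh$ produces a degree-$d$ form vanishing on $C\cup\big((A\cup B)_{\hat C}\setminus\{q\}\big)$ with $F(q)=g(q)h(q)\neq 0$.

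It remains to extract the contradiction. Writing $A=A_C\cup A_{\hat C}$ with $A_C\subset C$ and $A_{\hat C}\subseteq (A\cup B)_{\hat C}\setminus\{q\}$ (using $q\notin A$), the form $F$ vanishes on all of $A$; hence $\nu_d(A)\subset\{F=0\}$, so $\langle\nu_d(A)\rangle\subseteq\{F=0\}$, and in particular $P\in\{F=0\}$ because $P\in\langle\nu_d(A)\rangle$. The same computation with $B=B_C\cup B_{\hat C}$ shows that $F$ vanishes on $B\setminus\{q\}$, so $\langle\nu_d(B\setminus\{q\})\rangle\subseteq\{F=0\}$, whereas $F(q)\neq0$ gives $\nu_d(q)\notin\{F=0\}$. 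Since $\nu_d(B)$ is linearly independent, $\langle\nu_d(B\setminus\{q\})\rangle$ is a hyperplane of $\langle\nu_d(B)\rangle$, and the hyperplane $\{F=0\}$ meets $\langle\nu_d(B)\rangle$ exactly in it; from $P\in\langle\nu_d(B)\rangle\cap\{F=0\}$ I conclude $P\in\langle\nu_d(B\setminus\{q\})\rangle$, contradicting $P\notin\langle\nu_d(S')\rangle$ for $S'\subsetneq B$. The case $q\in A_{\hat C}\setminus B_{\hat C}$ is symmetric, with $F$ now vanishing on $B$ and on $A\setminus\{q\}$ and the minimality of $A$ being contradicted. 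Hence in either case $A_{\hat C}=B_{\hat C}$. The conceptual crux, beyond the construction of $g$, is that $A$ is used only to certify $P\in\{F=0\}$ while $B$ is used to contradict minimality, so both halves of the hypothesis $P\in\langle\nu_d(A)\rangle\cap\langle\nu_d(B)\rangle$ are genuinely needed.
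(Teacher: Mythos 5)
Your argument is correct and is essentially the paper's own proof unpacked: the paper chooses $M\in\vert\mathcal{I}_C(t)\vert$ with $M\cap(A\cup B)=C\cap(A\cup B)$ and invokes the residual exact sequence together with \cite[Lemma 8]{bb1}, and the separating form $F=gh$ you build (a degree-$t$ hypersurface through $C$ missing $q$ times a degree-$(d-t)$ form supplied by the $h^1$-vanishing hypothesis) is exactly the mechanism behind that citation. The only point to keep an eye on, which you correctly flag, is the base-point-freeness of $\vert\mathcal{I}_C(t)\vert$ outside $C$ in the three degree-two cases; the paper asserts this without proof as well.
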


\begin{proof} The case $t=1$ is \cite[Lemma 8]{bb1}.
If $t=2$, then either $C$ is a conic or $m\ge 3$ and $C$ is a disjoint union of $2$ lines.
In both cases we have $h^0(\mathcal {I}_C(t)) >0$
and the linear system $\vert \mathcal {I}_C(t)\vert$ has no base points outside $C$. Since
$A\cup B$ is a finite set,
there is $M\in \vert \mathcal {I}_C(t)\vert$ such that $M\cap (A\cup  B) = C\cap (A\cup B)$. Look
at the residual exact sequence (also called the Castelnuovo's exact sequence):
\begin{equation}\label{eqa1}
0 \to \mathcal {I}_{(A\cup B)_{\hat{C}}}(d-t) \to \mathcal {I}_{A\cup B}(d)
\to \mathcal {I}_{(A\cup B)\cap M,M}(d) \to 0
\end{equation}
We can now repeat the same proof of \cite[Lemma 8]{bb1} but starting with (\ref{eqa1}) instead of the exact sequence used there (cfr. first displayed formula in the proof of \cite[Lemma 8]{bb1}).

We will therefore get
$A_{\hat{M}} = B_{\hat{M}}$. Now, since $M\cap (A\cup  B) = C\cap (A\cup B)$, we
are done.
\end{proof}

We are now going to prove Theorem \ref{i1}.

\vspace{0.3cm}

\quad {\emph {Proof of Theorem \ref{i1}}} 

Fix $P\in \mathbb {P}^N(\mathbb {R})$ such that $r_{\mathbb{C}} (P)+r_{\mathbb{R}} (P) \le 3d-1$
and $r_{\mathbb{C}} (P) \ne r_{\mathbb{R}} (P)$. 
\\ Fix any set $S_{\mathbb{C}}\subset \mathbb {P}^m(\mathbb {C})$
evincing $r_{\mathbb{C}} (P)$ and any ${S_{\mathbb{R}}}\subset \mathbb {P}^m(\mathbb {R})$ evincing $r_{\mathbb{R}} (P)$.
\\
By applying \cite{bb}, Lemma 1, we immediately get that
$$h^1(\mathcal {I}_{S_{\mathbb{C}}\cup {S_{\mathbb{R}}}}(d)) >0.$$ 
Since $\sharp (S_{\mathbb{C}})+\sharp ({S_{\mathbb{R}}}) \le 3d-1$, either there is a line $l\subset \mathbb {P}^m$ such that $\sharp (S_{\mathbb{C},l}\cup {S_{\mathbb{R},l}}) \ge d+2$ or there is a conic $C$ such that $\sharp  (S_{\mathbb{C},C}\cup {S_{\mathbb{R},C}}) \ge 2d+2$ (\cite{c2}, Theorem 3.8). We are going to study separately these two cases in items (\ref{1}) and (\ref{2}) below.

\begin{enumerate}

\item\label{1} In this step we assume the existence of a line $l \subset \mathbb {P}^m$ such that 
$$\sharp ( S_{\mathbb{C},l}\cup {S_{\mathbb{R},l}})\ge d+2.$$

\newcounter{mycounter}
\setcounter{mycounter}{\theenumi}
\end{enumerate}

This hypothesis, together with $r_{\mathbb{R}}(P)\neq r_{\mathbb{C}}(P)$,  immediately implies property (\ref{aiii}) of the statement of the theorem.

We are now going to distinguish the case $h^1(\mathcal {I}_{S_{\mathbb{C}}\cup {S_{\mathbb{R}}}}(d-1))=0$ (item (1.1) below) from the case $h^1(\mathcal {I}_{S_{\mathbb{C}}\cup {S_{\mathbb{R}}}}(d-1))>0$ (item (1.2) below).

\vspace{0.1cm}
\quad (1.1)  Assume $h^1(\mathcal {I}_{S_{\mathbb{C}}\cup {S_{\mathbb{R}}}}(d-1))=0$.

First of all, observe that the line $l\subset \mathbb{P}^m$ is well defined over $\mathbb{R}$ since it contains at least 2 points of ${S_{\mathbb{R}}}$ (Remark \ref{d0}). Then, by Lemma \ref{c2}, we have that ${S_{\mathbb{C}}}$ and ${S_{\mathbb{R}}}$ have to coincide out of the line $l$:  
$${S_{\mathbb{C}}}\setminus S_{\mathbb{C},l} = {S_{\mathbb{R}}}\setminus S_{\mathbb{R},l}:=S_{\hat{l}},$$
and this proves  (\ref{ai}) of the statement of the theorem in this case (1.1). 

The fact that $\sharp (S_{\mathbb{C}}) < \sharp ({S_{\mathbb{R}}})$  implies that $\sharp (S_{\mathbb{R},l}) > (d+2)/2$ and $\sharp (S_{\hat{l}}) \le d$, 
hence  
$h^1(\mathcal {I}_{S_{\hat{l}}\cup l}(d))=0.$

Therefore we have that $\dim (\langle \nu _d(S_{\hat{l}} \cup l)\rangle )= \sharp (S_{\hat{l}})+d+1$, $\dim (\langle \nu
_d(S_{\hat{l}})\rangle ) =\sharp (S_{\hat{l}})-1$ and $\dim (\langle \nu _d(l)\rangle )=d+1$, and Grassmann's formula
gives $\langle \nu _d(S_{\hat{l}})\rangle \cap \langle \nu _d(l)\rangle = \emptyset$. \\

Since $P\in \langle \nu _d(S_{\hat{l}}\cup S_{\mathbb{C}})\rangle$ and $S_{\mathbb{C},l}\subset l$, the
set $\langle \nu _d(S_{\hat{l}})\cup \{P\}\rangle \cap \langle \nu _d(l)\rangle$ is a single point, $P_l\in \mathbb {P}^N(\mathbb {R})$. \\

Since $P\in \langle \nu _d(S_{\mathbb{C}})\rangle$ and $P\notin \langle \nu _d(S_{\mathbb{C}}')\rangle$ for any $S'\subsetneq S_{\mathbb{C}}$,
the set $\langle \nu _d(S_{\hat{l}})\cup \{P\}\rangle \cap \langle \nu _d(S_{\mathbb{C},l})\rangle$ is a single point, $P_C$ 
(Remark \ref{d1}). 

Then obviously:
$$P_C=P_l\in \mathbb{P}^N(\mathbb{R}).$$ 
Since $S_{\mathbb{C}}$ evinces $r_{\mathbb{C}} (P)$, then $S_{\mathbb{C},l}$ evinces $P_l$ (Remark \ref{d1}). In the same way we see that  $\langle \nu _d(S_{\hat{l}})\cup \{P\}\rangle \cap \langle \nu _d(S_{\mathbb{R},l})\rangle =\{P_l\}$ and that $S_{\mathbb{R},l}$ evinces $r_{\mathbb{R}} (P_l)$.
This proves (\ref{aii}) of Theorem \ref{i1} in this case (1.1). 

\vspace{0.1cm}
\quad (1.2) Assume $h^1(\mathcal {I}_{S_{\mathbb{C}}\cup {S_{\mathbb{R}}}}(d-1))>0$.

First of all, observe that there exists a line $r\subset \mathbb {P}^m$
such that $\sharp (r\cap (S_{\mathbb{C}}\cup {S_{\mathbb{R}}})_{\hat{l}})\ge d+1$, because $\sharp (S_{\mathbb{C}}\cup {S_{\mathbb{R}}})_{\hat{l}} \le 3d-1-d-2 \le 2(d-1)+1$. 
\\
By the same reason, if we write: $C:=l\cup r$, we get that $\sharp (S_{\mathbb{C}}\cup {S_{\mathbb{R}}})_{\hat{C}}\le 3d-1 -d-2-d-1 \le d-2$ and hence   $h^1(\mathcal {I}_{(S_{\mathbb{C}}\cup {S_{\mathbb{R}}})_{\hat{C}}}(d-2))=0$ 
(e.g. by
\cite{bgi}, Lemma 34, or by \cite{c2}, Theorem 3.8). Lemma \ref{c2}
gives: 
\begin{equation}\label{SChat}
S_{\mathbb{C},{\hat{C}}}= {S_{\mathbb{R},{\hat{C}}}}. 
\end{equation}

-- Assume for the moment $l\cap r \ne \emptyset$. 
In this case, Remark \ref{d1} indicates that we can consider case (\ref{b}) of the statement of the theorem. Therefore (\ref{SChat}) proves (\ref{bi}) in the case that the conic $C$ in (\ref{b}) in the statement of the theorem is reduced. 
Moreover, condition (\ref{biv}) is satisfied, because $\sharp (r\cap (S_{\mathbb{C}}\cup {S_{\mathbb{R}}})_{\hat{l}}) \ge d+1$. 

-- Now assume $l\cap r = \emptyset$.  We will check that we are in case (\ref{a}) with respect to the line $l$ if $\sharp (S_{\mathbb{C},r}\cup {S_{\mathbb{R},r}}) =d+1$, while we are in case (\ref{c}) with respect to the lines
$l$ and $r$ if $\sharp  (S_{\mathbb{C},r}\cup {S_{\mathbb{R},r}}) \ge d+2$, and the case $\sharp (S_{\mathbb{C},l}\cup {S_{\mathbb{R},l}}) = \sharp (S_{\mathbb{C},r}\cup {S_{\mathbb{R},r}})
= d+1$ cannot occur.

Set $\Gamma:=  l\cup r$. Since $r\cap l =\emptyset$, we have $\dim \langle \Gamma \rangle =3$ and hence
$m \ge 3$. 

--- Assume for the moment $m\ge 4$.
Hence $\sharp (S_{\mathbb{C}}\cup {S_{\mathbb{R}}})_{\widehat{\langle \Gamma \rangle}} \le d$ and $h^1(\mathcal {I}_{ (S_{\mathbb{C}}\cup {S_{\mathbb{R}}})_{\widehat{\langle \Gamma \rangle}}}(d-1))=0$. Therefore:

\begin{equation}\label{CRGamma}
S_{\mathbb{C}, \widehat{\langle\Gamma\rangle}}= {S_{\mathbb{R}, \widehat{\langle\Gamma\rangle}}}
\end{equation}
and the set $\langle \{P\}\cup \nu _d(S_{\mathbb{C}, \widehat{\langle\Gamma\rangle}})\rangle \cap
\langle \nu _d(\langle \Gamma \rangle)\rangle$ is a single real point:

\begin{equation}\label{Oset}
O:=\langle \{P\}\cup \nu _d(S_{\mathbb{C}, \widehat{\langle\Gamma\rangle}})\rangle \cap
\langle \nu _d(\langle \Gamma \rangle)\rangle \in \mathbb{P}^N(\mathbb{R}),
\end{equation}
 $S_{\mathbb{C},\Gamma}$ evinces $r_{\mathbb{C}} (O)$
and ${S_{\mathbb{R}, \Gamma}}$ evinces $r_{\mathbb{R}} (O)$. Now, (\ref{CRGamma}) implies that if we are either in case (\ref{a}) or in case (\ref{c}) of the theorem, we can simply study what happens at $S_{\mathbb{C}, {\langle\Gamma\rangle}}$ and at $ {S_{\mathbb{R},{\langle\Gamma\rangle}}}$, which means that we can reduce our study to  the case  $m=3$,  since $\langle \Gamma \rangle= \mathbb{P}^3$.

--- Until step (\ref{2}) below, we will assume $m=3$.

The linear system $\vert \mathcal {I}_{\Gamma}(2)\vert$ on $\langle \Gamma \rangle$ has no base points outside $\Gamma$ itself. Since
$S_{\mathbb{C}}\cup {S_{\mathbb{R}}}$ is finite, there is a smooth quadric surface $W$ containing $\Gamma$ such that
$$S_{\mathbb{C},W}\cup {S_{\mathbb{R},W}} = S_{\mathbb{C},\Gamma}\cup {S_{\mathbb{R},\Gamma}}.$$ 
Moreover, such a $W$ can be found among the real smooth quadrics,  since $l$ and $r$ are real lines.\\

Since $\sharp (S_{\mathbb{C},\langle \Gamma \rangle}\cup {S_{\mathbb{R}, \langle \Gamma \rangle}})_{\hat{W}}
\le d-1$, we have $h^1(\mathcal {I}_{(S_{\mathbb{C}}\cup {S_{\mathbb{R}}})_{\hat{W}}}(d-2))=0$. Hence
Lemma \ref{c2} applied to the point $O$ defined in (\ref{Oset}),
gives:
$$(S_{\mathbb{C}, \langle \Gamma \rangle})_{\hat{W}}= (S_{\mathbb{R},\langle \Gamma \rangle})_{\hat{W}},$$
$\langle \{O\}\cup \nu _d(S_{\mathbb{C},\langle \Gamma , \rangle})_{\hat{W}}\rangle \cap \langle \nu _d(W)\rangle$ is a single real point 
\begin{equation}\label{Opset} 
O'=\langle \{O\}\cup \nu _d(S_{\mathbb{C},\langle \Gamma , \rangle})_{\hat{W}}\rangle \cap \langle \nu _d(W)\rangle\in \mathbb {P}^N(\mathbb {R}),
\end{equation}
and $S_{\mathbb{C},W}$ evinces
$r_{\mathbb{C}} (O')$. 
If $(O',S_{\mathbb{C},W},{S_{\mathbb{R},W}})$ is either as in case (\ref{a}) or in case (\ref{c}) of the statement of the theorem, then $(O,S_{\mathbb{C}, \langle \Gamma \rangle},{S_{\mathbb{R}, \langle \Gamma \rangle}})$ is in the same case. 
Consider the system $\vert (1,0)\vert $ of lines on the smooth quadric surface $W$ containing $\Gamma$. We have that $h^1(W,\mathcal {O}_W(d-2,d))=0 $, and hence the restriction map $H^0(W,\mathcal {O}_W(d))\to
H^0(\Gamma,\mathcal {O}_{\Gamma}(d))$ is surjective. Therefore 
$$h^1(W,\mathcal {I}_{W\cap (S_{\mathbb{C}}\cup S_{\mathbb{R}})}(d)) = h^1(l,\mathcal {I}_{l\cap (S_{\mathbb{C}}\cup {S_{\mathbb{R}}}),l}(d))+h^1(r,\mathcal {I}_{r\cap (S_{\mathbb{C}}\cup {S_{\mathbb{R}}}),r}(d)).$$ 
Now, this last equality, together with the facts that $\nu _d(S_{\mathbb{C},W})$ and $\nu _d({S_{\mathbb{R},W}})$ are linearly independent and $(S_{\mathbb{C}}\cup {S_{\mathbb{R}}})_{W}
\subset \Gamma$, gives:
\begin{eqnarray}\label{eqa}
&\dim (\langle \nu _d(S_{\mathbb{C},W})\rangle \cap \langle \nu _d({S_{\mathbb{R},W}})\rangle) = 
\sharp (S_{\mathbb{C}}\cap S_{\mathbb{R}})_l
+ \sharp (S_{\mathbb{C}}\cap S_{\mathbb{R}})_r+&\nonumber  \\
&h^1(l,\mathcal {I}_{l\cap (S_{\mathbb{C}}\cup {S_{\mathbb{R}}}),l}(d))+h^1(r,\mathcal {I}_{r\cap (S_{\mathbb{C}}\cup {S_{\mathbb{R}}}),r}(d)).
\end{eqnarray}

\vspace{0.1cm}
\quad (1.2.1) Observe that (\ref{eqa}) implies that the case $\sharp (S_{\mathbb{C}}\cup {S_{\mathbb{R}}})_{r}= \sharp (S_{\mathbb{C}}\cup {S_{\mathbb{R}}})_{l} =d+1$ cannot happen because there is no contribution from $h^1(l,\mathcal {I}_{l\cap (S_{\mathbb{C}}\cup {S_{\mathbb{R}}}),l}(d))+h^1(r,\mathcal {I}_{r\cap (S_{\mathbb{C}}\cup {S_{\mathbb{R}}}),r}(d))$ since both terms, in this case, are equal to 0. So, we can assume that at least $\sharp (S_{\mathbb{C}}\cup {S_{\mathbb{R}}})_{l} >d+1$. 

\vspace{0.1cm}
\quad (1.2.2) Assume $\sharp (S_{\mathbb{C}}\cup {S_{\mathbb{R}}})_{r} =d+1$ and $\sharp (S_{\mathbb{C}}\cup {S_{\mathbb{R}}})_{l} >d+1$. 
\\
To prove that we are in case (\ref{a}) with respect to $l$ it is sufficient to prove $S_{\mathbb{C},r} = {S_{\mathbb{R},r}}$. 
\\
We have
$\dim \langle \nu _d(S_{\mathbb{C}}\cup {S_{\mathbb{R}}})\rangle = \sharp (S_{\mathbb{C}}\cup {S_{\mathbb{R}}})-1 -h^1(\mathcal {I}_{S_{\mathbb{C}}\cup {S_{\mathbb{R}}}}(d))$.
Since
$\nu _d(S_{\mathbb{C}})$ and $\nu _d({S_{\mathbb{R}}})$ are linearly independent, $\sharp (S_{\mathbb{C}}\cup {S_{\mathbb{R}}})=\sharp (S_{\mathbb{C}}) +\sharp ({S_{\mathbb{R}}})
-\sharp (S_{\mathbb{C}}\cap {S_{\mathbb{R}}})$,  $\sharp (S_{\mathbb{C},l}\cup {S_{\mathbb{R},l}})=\sharp (S_{\mathbb{C},l}) +\sharp (S_{\mathbb{R},l})-
\sharp (S_{\mathbb{C},l}\cap S_{\mathbb{R},l})$ and $h^1(\mathcal {I}_{S_{\mathbb{C}}\cup {S_{\mathbb{R}}}}(d))
= h^1(l,\mathcal {I}_{S_{\mathbb{C},l}\cup {S_{\mathbb{R},l}}})$, Grassmann's formula gives
that $\langle \nu _d(S_{\mathbb{C}})\rangle \cap \langle \nu _d({S_{\mathbb{R}}})\rangle$
is generated by $\langle \nu _d(S_{\mathbb{C},l})\rangle \cap \langle \nu _d(S_{\mathbb{R},l})\rangle$.
Since $P\notin \langle \nu _d(S')\rangle$ for any $S'\subsetneq S_{\mathbb{C}}$, we get
$S_{\mathbb{C}} = S_{\mathbb{C},l} \cup (S_{\mathbb{C}}\cap {S_{\mathbb{R}}})_{\hat{l}}$, i.e. $S_{\hat{l}} = {S_{\mathbb{R}}}\setminus S_{\mathbb{C},l}$. Hence
we can consider case (\ref{a}), and (\ref{SChat}) proves property (\ref{ai}) also for the case (1.2) that we are treating. The
point $P_l$ that we need to get (aii) can be identified with the point $O'$ defined in (\ref{Opset}) while (aiii) comes from
our hypotheses.

This gives all cases (\ref{a}) of Theorem \ref{i1}.

\vspace{0.1cm}
\quad (1.2.3) Assume that both $\sharp  (S_{\mathbb{C}}\cup {S_{\mathbb{R}}})_{r} \ge d+2$ and $\sharp (S_{\mathbb{C}}\cup {S_{\mathbb{R}}})_{l} \geq d+2$. 
\\
We need to prove that we are in case
(\ref{c}). 
Recall that $S_{\mathbb{C},{\hat{\Gamma}}} = S_{\mathbb{R},{\hat{\Gamma}}}$
and that $h^1(\mathcal {I}_{S_{\mathbb{C}, {\hat{\Gamma}}}\cup \Gamma}(d))=0$. The latter
equality implies, as in Remark \ref{d1}, that $\langle \{P\}\cup \nu _d(S_{\mathbb{C},{\Gamma}})\rangle
\cap \langle \nu _d(\Gamma)\rangle$ is a single real point $O_1$,
that $S_{\mathbb{C}, \Gamma}$ evinces $r_{\mathbb{C}}(O_1)$ and that ${S_{\mathbb{R},\Gamma}}$ evinces $O_1$. Now $O_1$ plays the role of $O_{\Gamma}$ of case (\ref{ciii}) in Theorem \ref{i1}.

Since
$\langle \nu _d(l)\rangle \cap \langle \nu_d(r)\rangle =\emptyset$ and $O_1\in \langle \nu _d(\Gamma)\rangle$,
the sets $\langle \{O_1\}\cup \nu _d(l)\rangle \cap \langle \nu _d(l)\rangle$ (resp. $\langle \{O_1\}\cup \nu _d(r)\rangle \cap
\langle \nu _d(r)\rangle$) are formed by a unique point $O_2$ (resp. $O_3$).
Remark \ref{d1} gives that $O_i\in \mathbb {P}^N(\mathbb {R})$, $i=1,2$, $S_{\mathbb{C},l}$ evinces $r_{\mathbb{C}} (O_2)$,
$S_{\mathbb{R},l}$ evinces $r_{\mathbb{R}} (O_2)$,  $S_{\mathbb{C},r}$ evinces $r_{\mathbb{C}} (O_3)$ and  ${S_{\mathbb{R},r}}$ evinces $r_{\mathbb{R}} (O_3)$.
The hypotheses of the case (1.2.3) coincide with (\ref{cii}) of the statement of the theorem, while (\ref{SChat}) gives
also property (\ref{ci}). Moreover $O_2$ and $O_3$ defined above coincide with $O_l$ and $O_r$ in (\ref{civ}) of Theorem \ref{i1}, therefore we have also proved case (\ref{c}) of Theorem \ref{i1}.

\begin{enumerate}
\setcounter{enumi}{\themycounter}

\item\label{2}  Now assume the existence of a conic $C\subset \mathbb {P}^m$ such that
\begin{equation}\label{CRC}
\deg (S_{\mathbb{C}}\cup {S_{\mathbb{R}}})_C \ge 2d+2.
\end{equation}

\end{enumerate}

Since $\sharp (S_{\mathbb{C}}\cup {S_{\mathbb{R}}})_C\le 3d-1-2d-2 \le d-1$, we
have $h^1(\mathcal {I}_{(S_{\mathbb{C}}\cup {S_{\mathbb{R}}})_{\hat{C}}}(d-2))=0$. By Lemma \ref{c2}
we have 
\begin{equation}\label{SCC}
S_{\mathbb{C}, \hat{C}} = {S_{\mathbb{R}, \hat{C}}},
\end{equation}
the set $\langle \{P\} \cup \nu_d(S_{\mathbb{C}, \hat{C}})\rangle  \cap
\langle \nu _d(\langle C \rangle)\rangle$ is a single point: 
\begin{equation}\label{Pp}
P':=\langle \{P\} \cup \nu_d(S_{\mathbb{C}, \hat{C}})\rangle  \cap
\langle \nu _d(\langle C \rangle)\rangle
\end{equation} 
and $S_{\mathbb{C},C}$ evinces
$r_{\mathbb{C}} (P')$. Moreover, if $C$ is defined over $\mathbb {R}$, then $P'\in \mathbb {P}^N(\mathbb {R})$ and $S_{\mathbb{R},C}$ evinces $r_{\mathbb{R}} (P')$. Hence $\sharp (S_{\mathbb{C},C})< \sharp (S_{\mathbb{C}}\cap {S_{\mathbb{R}}})$. 

\vspace{0.1cm}
\quad (2.1) Assume that $C$ is smooth.  Therefore (\ref{SCC}) proves (\ref{bi}) of the statement of the theorem in the case where $C$ is smooth. Since the reduced case is proved above (immediately after the displayed formula (\ref{SChat})), we have concluded the proof of (\ref{bi}).

Moreover the hypothesis (\ref{CRC}) coincides with (\ref{biii}) of the statement of the theorem since $\sharp (S_{\mathbb{C},C})$ is obviously strictly smaller than $ \sharp (S_{\mathbb{R},C})$. This concludes (\ref{biii}).

The fact that $\sharp (S_{\mathbb{C},C})< \sharp (S_{\mathbb{R},C})$ also implies that
$\sharp (S_{\mathbb{R},C}) \ge 5$. Since each point of ${S_{\mathbb{R}}}$ is real, $C$ is real. Remark \ref{d1} gives
that $ \nu_d({S_{\mathbb{R},C}})$ evinces $r_{\mathbb{R}} (P')$. Since ${S_{\mathbb{R},C}} \subset C$, ${S_{\mathbb{R},C}}$ also evinces
the real symmetric tensor rank of $P'$ with respect to the degree $2d$ rational normal
curve $\nu _d(C)$. 
The point $P'$ defined in (\ref{Pp}) plays the role of the point $P_C$ appearing in (\ref{bii}) of the statement of the theorem. Therefore, we have just proved  (\ref{bii}) of Theorem \ref{i1}.

We treat the case (2.2) below for sake of completeness, but we can observe that this concludes the proof of Theorem \ref{i1}.

\vspace{0.1cm}
\quad (2.2) Assume that $C$ is reducible, say $C =L_1\cup L_2$ with $L_1$ and $L_2$ lines
and $\sharp ((S_{\mathbb{C}}\cup {S_{\mathbb{R}}})_{L_1}) \ge \sharp ((S_{\mathbb{C}}\cap {S_{\mathbb{R}}})_{L_2})$.
 If $\sharp ((S_{\mathbb{C}}\cup {S_{\mathbb{R}}})\cap (L_2\setminus L_2\cap L_1)) \le d$, then we proved in step  (\ref{1}) that we are
in case (\ref{a}) with respect to the line $L_1$. Hence we may assume $\sharp ((S_{\mathbb{C}}\cup {S_{\mathbb{R}}})\cap (L_2\setminus L_2\cap L_1)) \ge d+1$. Thus even condition  (\ref{biv}) is satisfied as already remarked above after the displayed formula (\ref{SChat}).
\endproof

\vspace{0.3cm}

\def\fine{

We analyzed in step (a) all cases arising
if there is a line $L\subset C$ such that $\sharp (L\cap C) \ge d+2$. Hence we may assume
that there is no such line. In particular $C$ is not a double line. If $C$ is smooth, then $C$ is defined over $\mathbb {R}$, because $\sharp ( {S_{\mathbb{R},C}})\ge d+2 \ge 5$ and each point of ${S_{\mathbb{R}}}$ is real. If $C$ is reducible, say $C = C_1\cup C_2$, the Since $\sharp (S_{\mathbb{C}}\cup {S_{\mathbb{R}}})_C\le 3d-1-2d-2 \le d-1$, we
have $h^1(\mathcal {I}_{(S_{\mathbb{C}}\cup {S_{\mathbb{R}}})_{\hat{C}}}(d-2))=0$.

\begin{lemma}\label{c1}
Fix a line $l\subset \mathbb {P}^m$ defined
over $\mathbb {R}$, sets ${S_{\mathbb{C}}}_1\subset l(\mathbb {C})$, ${S_{\mathbb{R}}}_1\subset l(\mathbb {R})$,
$E\subset \mathbb {P}^m(\mathbb {R})\setminus l(\mathbb {R})$ and
$P_l\in \langle l\rangle (\mathbb {R})$ such that ${S_{\mathbb{C}}}_1\ne {S_{\mathbb{R}}}_1$, $\sharp (E) \le (d-1)/2$, ${S_{\mathbb{C}}}_1$ evinces $Crs (P_l)$
with respect to the rational normal curve $\nu _d(l)$ and ${S_{\mathbb{R}}}_1$ evinces $r_{\mathbb{R}} (P_l)$ with
respect to the rational normal curve curve $\nu _d(l)$ and that $\sharp ({S_{\mathbb{C}}}_1)<\sharp ({S_{\mathbb{R}}}_1)$. Fix
$P\in \langle \nu _d(E)\cup \{P_l\}\rangle (\mathbb {R})$ such that $P\notin \langle G\rangle$
for any $G\subsetneq \nu _d (E)\cup \{P_l\}$. Set $S_{\mathbb{C}}:= E\cup {S_{\mathbb{C}}}_1$ and ${S_{\mathbb{R}}}:= E\cup {S_{\mathbb{R}}}_1$.
Assume $\sharp (S_{\mathbb{C}})+\sharp ({S_{\mathbb{R}}}) \le 2d+2$ and $\sharp ({S_{\mathbb{C}}}_1)<\sharp ({S_{\mathbb{R}}}_1)$.

Then $S_{\mathbb{C}}$ evinces $r_{\mathbb{C}} (P)$ and ${S_{\mathbb{R}}}$ evinces $r_{\mathbb{R}} (P)$.
\end{lemma}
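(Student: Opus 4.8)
The plan is to prove the two assertions in turn, first $r_{\mathbb{C}}(P)=\sharp(S_{\mathbb{C}})$ and then, by the same template, $r_{\mathbb{R}}(P)=\sharp({S_{\mathbb{R}}})$. In both cases the upper bound is immediate: since $P_l\in\langle\nu_d({S_{\mathbb{C}}}_1)\rangle$ and $P\in\langle\nu_d(E)\cup\{P_l\}\rangle$ we get $P\in\langle\nu_d(S_{\mathbb{C}})\rangle$, whence $r_{\mathbb{C}}(P)\le\sharp(S_{\mathbb{C}})$; likewise $r_{\mathbb{R}}(P)\le\sharp({S_{\mathbb{R}}})$ over $\mathbb{R}$. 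I would first record the numerical consequences of the hypotheses: from $\sharp({S_{\mathbb{C}}}_1)<\sharp({S_{\mathbb{R}}}_1)$ we get $\sharp(S_{\mathbb{C}})<\sharp({S_{\mathbb{R}}})$, and together with $\sharp(S_{\mathbb{C}})+\sharp({S_{\mathbb{R}}})\le 2d+2$ this yields $\sharp(S_{\mathbb{C}})\le d$. Next I would show that $\nu_d(S_{\mathbb{C}})$ and $\nu_d({S_{\mathbb{R}}})$ are linearly independent, i.e.\ $h^1(\mathcal{I}_{S_{\mathbb{C}}}(d))=h^1(\mathcal{I}_{{S_{\mathbb{R}}}}(d))=0$, using the residual (Castelnuovo) sequence along $l$, namely $0\to\mathcal{I}_{E}(d-1)\to\mathcal{I}_{S_{\mathbb{C}}}(d)\to\mathcal{I}_{{S_{\mathbb{C}}}_1,l}(d)\to 0$: here $h^1(\mathcal{I}_E(d-1))=0$ because $\sharp(E)\le(d-1)/2$ is far below the thresholds of \cite{c2}, Theorem 3.8, and $h^1(l,\mathcal{I}_{{S_{\mathbb{C}}}_1,l}(d))=0$ because ${S_{\mathbb{C}}}_1$ is at most $d+1$ points of $\mathbb{P}^1$. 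Linear independence, combined with $P\notin\langle G\rangle$ for $G\subsetneq\nu_d(E)\cup\{P_l\}$ and the irredundancy of ${S_{\mathbb{C}}}_1$ for $P_l$, shows that $S_{\mathbb{C}}$ (resp.\ ${S_{\mathbb{R}}}$) is irredundant for $P$.

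For the complex lower bound, suppose for contradiction that some $A\subset\mathbb{P}^m(\mathbb{C})$ evinces $r_{\mathbb{C}}(P)$ with $\sharp(A)\le\sharp(S_{\mathbb{C}})-1$. Since $A$ and $S_{\mathbb{C}}$ are distinct irredundant sets both spanning $P$, \cite{bb}, Lemma 1, gives $h^1(\mathcal{I}_{A\cup S_{\mathbb{C}}}(d))>0$. As $\sharp(A\cup S_{\mathbb{C}})\le 2\sharp(S_{\mathbb{C}})-1\le 2d-1<2d+2$, \cite{c2}, Theorem 3.8, leaves only the possibility of a line $l'$ with $\sharp((A\cup S_{\mathbb{C}})\cap l')\ge d+2$. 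Then $\sharp((A\cup S_{\mathbb{C}})_{\hat{l'}})\le d-3$, so Lemma \ref{c2} applies and yields $A_{\hat{l'}}=(S_{\mathbb{C}})_{\hat{l'}}$. I would then force $l'=l$: if $l'\ne l$ then ${S_{\mathbb{C}}}_1\subset l$ meets $l'$ in at most one point, so $\sharp((S_{\mathbb{C}})\cap l')\le\sharp(E)+1\le(d+1)/2$, and $A_{\hat{l'}}=(S_{\mathbb{C}})_{\hat{l'}}$ forces $\sharp(A\cap l')\le\sharp((S_{\mathbb{C}})\cap l')-1$, whence $\sharp((A\cup S_{\mathbb{C}})\cap l')\le 2\sharp((S_{\mathbb{C}})\cap l')-1\le d<d+2$, a contradiction. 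With $l'=l$ we have $A_{\hat{l}}=(S_{\mathbb{C}})_{\hat{l}}=E$, so $A_l:=A\setminus E\subset l$ with $\sharp(A_l)<\sharp({S_{\mathbb{C}}}_1)$; residuating $P$ by $\nu_d(E)$ via Remark \ref{d1} shows $A_l$ spans $P_l$ on the rational normal curve $\nu_d(l)$, contradicting that ${S_{\mathbb{C}}}_1$ evinces $r_{\mathbb{C}}(P_l)$ there. Hence $r_{\mathbb{C}}(P)=\sharp(S_{\mathbb{C}})$.

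The real lower bound follows the same template with a real competitor $B$, $\sharp(B)\le\sharp({S_{\mathbb{R}}})-1$, and $Z=B\cup{S_{\mathbb{R}}}$. The reduction to $\nu_d(l)$, forcing $l'=l$ and then contradicting the minimality of ${S_{\mathbb{R}}}_1$ for $r_{\mathbb{R}}(P_l)$, is identical, and is exactly where the realness of $l$, $B$ and $P_l$ is used so that the competing decomposition of $P_l$ stays real. The genuinely new point, and the step I expect to be the main obstacle, is that the cardinality bound no longer rules out the conic branch of \cite{c2}, Theorem 3.8: using $\sharp({S_{\mathbb{R}}}_1)\le d$ (maximal rank on a degree-$d$ rational normal curve) one only gets $\sharp(Z)\le 3d-2$. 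I would dispose of a conic $C'$ with $\sharp(Z\cap C')\ge 2d+2$ by applying Lemma \ref{c2} with $t=2$ to obtain $B_{\hat{C'}}=(S_{\mathbb{R}})_{\hat{C'}}$ and then counting on $C'$ via the induced real point $P''\in\langle\nu_d(C')\rangle$: when $l\not\subset C'$ the set ${S_{\mathbb{R}}}_1\subset l$ meets $C'$ in at most two points, so $\sharp((S_{\mathbb{R}})\cap C')\le\sharp(E)+2$ and the two distinct decompositions of $P''$ are too small to coexist on the degree-$2d$ curve $\nu_d(C')$, contradicting the Sylvester-type bound.

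The delicate remaining subcase is a reducible conic $C'=l\cup l_2$ having $l$ itself as a component, so that all of ${S_{\mathbb{R}}}_1$ lies on $C'$ and the preceding count breaks down. Here I would argue that no component carries $d+2$ points of $Z$ (otherwise we are back in the line branch with $l'=l$), so each line of $C'$ meets $Z$ in exactly $d+1$ points and the node lies off $Z$; in particular ${S_{\mathbb{R}}}_1$ does not meet $l_2$, and I would then analyze $l$ and $l_2$ separately, keeping every intersection with $l$ under control, to reach the same contradiction with minimality of ${S_{\mathbb{R}}}_1$. Making this reducible-conic bookkeeping clean, while preserving realness throughout the residuations, is the crux; once it is settled we conclude $r_{\mathbb{R}}(P)=\sharp({S_{\mathbb{R}}})$, so that $S_{\mathbb{C}}$ and ${S_{\mathbb{R}}}$ evince $r_{\mathbb{C}}(P)$ and $r_{\mathbb{R}}(P)$ respectively.
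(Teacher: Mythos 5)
Your first half (the complex lower bound) is essentially the paper's own argument: compare the competitor $A$ with $S_{\mathbb{C}}$, note that $\sharp (A\cup S_{\mathbb{C}})\le 2d-1$ so that only the line alternative of \cite{c2}, Theorem 3.8, can occur, exclude $l'\ne l$ using $\sharp (E)\le (d-1)/2$, and then contradict the minimality of ${S_{\mathbb{C}}}_1$ for $P_l$ after residuating by $\nu _d(E)$. That part is correct and matches the paper.

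The real lower bound, however, has a genuine gap, and it sits exactly where you flag ``the crux''. By taking $Z=B\cup {S_{\mathbb{R}}}$ you only control $\sharp (Z)\le 3d-2$, so the conic alternative of \cite{c2}, Theorem 3.8, is really in play, and the dangerous subcase is a reducible conic $C'=l\cup l_2$ having $l$ as a component. There your text stops at a plan (``analyze $l$ and $l_2$ separately \dots to reach the same contradiction'') rather than an argument, and the subcase cannot be dismissed by counting alone: once $\sharp (Z\cap C')\ge 2d+2$ one automatically has $h^1(C',\mathcal {I}_{Z\cap C',C'}(d))\ge 1$ because $h^0(C',\mathcal {O}_{C'}(d))=2d+1$, and the resulting bound $\sharp (B\cap l_2)\ge (d+1)/2$ is perfectly compatible with $\sharp (B)\le \sharp ({S_{\mathbb{R}}})-1$, so no contradiction falls out of the bookkeeping you describe. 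The paper avoids the conic case altogether by a different comparison: having already proved that $S_{\mathbb{C}}$ evinces $r_{\mathbb{C}}(P)$, it compares the real competitor $G$ with $S_{\mathbb{C}}$ rather than with ${S_{\mathbb{R}}}$. Since $\sharp (S_{\mathbb{C}})\le d$ and $\sharp (G)\le \sharp ({S_{\mathbb{R}}})-1\le 2d+1-\sharp (S_{\mathbb{C}})$, one gets $\sharp (G\cup S_{\mathbb{C}})\le 2d+1$, and only a line $C_1$ with $\ge d+2$ points can appear. If $C_1=l$, residuation identifies the induced point with $P_l$ and minimality of ${S_{\mathbb{R}}}_1$ for $r_{\mathbb{R}}(P_l)$ gives $\sharp (G\cap l)\ge \sharp ({S_{\mathbb{R}}}_1)$, hence $\sharp (G)\ge \sharp ({S_{\mathbb{R}}})$; if $C_1\ne l$, then $G_{\hat{C_1}}=S_{\mathbb{C},\hat{C_1}}$ forces all but at most one point of $S_{\mathbb{C}}$ to be real, hence (by conjugation-stability) all of $S_{\mathbb{C}}$ is real, giving $r_{\mathbb{R}}(P_l)=r_{\mathbb{C}}(P_l)$ and contradicting $\sharp ({S_{\mathbb{C}}}_1)<\sharp ({S_{\mathbb{R}}}_1)$. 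You should switch to this comparison; as written, your real half is incomplete.
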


\begin{proof}
We first prove the lemma for $S_{\mathbb{C}}$. Assume that it is not true and take $F\subset \mathbb {P}^m(\mathbb {C})$ evincing $Crs (P)$. Since $\sharp (F) < \sharp (S_{\mathbb{C}}) \le d$, we
have $\sharp (S_{\mathbb{C}}\cup F) \le 2d-1$. Since $h^1(\mathcal {I}_{S_{\mathbb{C}}\cup F}(d)) >0$, we
get the existence of a line $r\subset \mathbb {P}^m$ such that $\sharp ((S_{\mathbb{C}}\cup F)) \ge d+2$.
Since $h^1(\mathcal {I}_{S_{\mathbb{C}}\cup F}(d)) >0$ (\cite{bb}, Lemma 1), there is a line $r\subset \mathbb {P}^m$
such that $\sharp (r\cap (S_{\mathbb{C}}\cup F)) \ge d+2$. Since $\sharp (S_{\mathbb{C}}\cup F\setminus (S_{\mathbb{C}}\cup F)\cap r))
\le d-1$, we have $h^1(\mathcal {I}_{S_{\mathbb{C}}\cup F\setminus (S_{\mathbb{C}}\cup F)\cap r)}(d-1))=0$.
Hence $F\setminus F\cap r = S_{\mathbb{C}}\setminus S_{\mathbb{C}}\cap r$. Since $\sharp (F)<\sharp (S_{\mathbb{C}})$, we
have $\sharp (F\cap r) < \sharp (S_{\mathbb{C}}\cap r)$. Hence $\sharp (S_{\mathbb{C}}\cap r) > (d+2)/2$. First assume $r\ne l$. Since
$\sharp (r\cap l) \le 1$, we have
$\sharp (E) \ge d/2$, a contradiction. Now assume $r=l$. Since $P\in \langle
\nu _d(F)\rangle$ and $P\notin \langle
\nu _d(F)\rangle$, the set $\langle \nu _d(F\setminus F\cap l)\cup \{P\}\langle \cap
\langle \nu _d(F\cap l)\rangle$ is a single point (call it $P_C)$. For
the same reason we have $\{P_l\} = \langle \nu _d(S_{\hat{l}})\cup \{P\}\langle \cap
\langle \nu _d(S_{\mathbb{C},l})\rangle$. Since $F\setminus F\cap l = S_{\hat{l}}$,
we have $P_C=P_l$. Since ${S_{\mathbb{C}}}_1$ evinces $r_{\mathbb{C}} (P_l)$, we get $\sharp (F\cap l)\ge \sharp
(S_{\mathbb{C}}\cap F)$. Hence $\sharp (F)\ge \sharp (S_{\mathbb{C}})$, a contradiction.

Now we prove the lemma for ${S_{\mathbb{R}}}$, using that it is true for $S_{\mathbb{C}}$.
Obviously ${S_{\mathbb{R}}}\subset \mathbb {P}^m(\mathbb {R})$ and $\sharp ({S_{\mathbb{R}}}) = \sharp ({S_{\mathbb{R}}}_1)+\sharp (E)$. We have
that $\sharp ({S_{\mathbb{R}}}_1)\le d$ (\cite{co}, ) and hence $\sharp ({S_{\mathbb{R}}}) \le (3d-1)/2$. By the real case of \cite{l}, Exercise 3.2.2.2, ${S_{\mathbb{R}}}_1$ evinces $r_{\mathbb{R}} (P_l)$. Assume that ${S_{\mathbb{R}}}$ does not
evinces $r_{\mathbb{R}} (P)$ and take $G\subset \mathbb {P}^m(\mathbb {R})$ such that $\sharp (G)< \sharp ({S_{\mathbb{R}}})$ and $G$ evinces $r_{\mathbb{R}} (P)$. First assume  $h^1(\mathcal {I}_{G\cup S_{\mathbb{C}}}(d)) >0$. Since
$\sharp (S_{\mathbb{C}})+\sharp (G) \le 2d$, there is a line $C_1$ such
that $\sharp (C_1\cap (S_{\mathbb{C}}\cup G)) \ge d+2$. Since
$\sharp (S_{\mathbb{C}}\cup G)-\sharp ((S_{\mathbb{C}}\cup G)\cap C_1)
\le 2d-d-2 \le d$, we have $h^1(\mathcal {I}_{(S_{\mathbb{C}}\cup G)\setminus (S_{\mathbb{C}}\cup G)\cap C_1)} (d-1)) =0$. Lemma \ref{c2} gives $G\setminus G\cap C_1 = S_{\mathbb{C}}\setminus {S_{\mathbb{C},C}}_1$. Hence
each point of $S_{\mathbb{C}}\setminus C_1$ is real. First assume
$C_1=l$. Hence $G\setminus G\cap l = E$. Since $S_{\mathbb{C}}$ evinces $r_{\mathbb{C}} (P)$, we
have $\sharp (G\cap l) \ge \sharp ({S_{\mathbb{C}}}_1)$ and in particular, $G\cap l \ne \emptyset$. Since
$G$ evinces $r_{\mathbb{R}} (P)$, the set $\langle \nu _d(G\cap l)\rangle \cap \langle \{P\} \cup E\rangle$
is a single point, $P_C$, and $G\cap l$ evinces $r_{\mathbb{R}} (P_C)$. Since
$h^1(\mathcal {I}_{(S_{\mathbb{C}}\cup G)\setminus (S_{\mathbb{C}}\cup G)\cap C_1)} (d-1)) =0$ and $C_1=l$, we have
$\langle \nu _d(l)\rangle \cap \langle \nu _d(E)\rangle =\emptyset$. Hence
$\langle \nu _d(l)\rangle \cap \langle \nu _d(E)\cup \{P\}\rangle$ is at most one point.
Since $P_l\in \langle \nu _d(l)\rangle \cap \langle \nu _d(E)\cup \{P\}\rangle$, we get
$P_l=P_C$. Hence $G\cap l$ evinces $r_{\mathbb{R}} (P_l)$. Hence $\sharp (G\cap l) =\sharp ({S_{\mathbb{R}}}_1)$.
Hence $\sharp (G)=\sharp ({S_{\mathbb{R}}})$, a contradiction.
Now assume $C_1\ne l$. Since $\sharp (C_1\cap l)\le 1$, and each point
of $E = S_{\hat{l}}$ is real, at most one point
of $S_{\mathbb{C}}$ is not real. Since $\sigma (S_{\mathbb{C}})=S_{\mathbb{C}}$, we get that
each point of $S_{\mathbb{C}}$ is real. Hence $Crs (P) =r_{\mathbb{R}} (P)$. Since
${S_{\mathbb{C}}}_1$ evinces $r_{\mathbb{C}} (P_l)$ and each point of $S_{\mathbb{C}}$ is real, we also get
$r_{\mathbb{R}} (P_l)=r_{\mathbb{C}} (P_l)$, i.e. $\sharp ({S_{\mathbb{C}}}_1)=\sharp ({S_{\mathbb{R}}}_1)$, i.e. $\sharp (S_{\mathbb{C}})=\sharp ({S_{\mathbb{R}}})$, a contradiction.
\end{proof}

}

\providecommand{\bysame}{\leavevmode\hbox to3em{\hrulefill}\thinspace}

\end{document}